

\documentclass[12pt]{amsart}


\batchmode

\setlength{\textwidth}{17.5cm}\oddsidemargin=-1cm\evensidemargin=-1cm

\frenchspacing

\begin{document}

\def\dbl{[\hskip -1pt[}
\def\dbr{]\hskip -1pt]}
\title {Local boundedness of Catlin $q$-type  }
\author{Ozcan Yazici}
\address{ Department  of Mathematics,  Middle East  Technical University,  06800 Ankara,  Turkey }
\email{oyazici@metu.edu.tr}




\def\Label#1{\label{#1}}

\def\1#1{\ov{#1}}
\def\2#1{\widetilde{#1}}
\def\6#1{\mathcal{#1}}
\def\4#1{\mathbb{#1}}
\def\3#1{\widehat{#1}}
\def\7#1{\mathscr{#1}}
\def\K{{\4K}}
\def\LL{{\4L}}

\def \MM{{\4M}}

\def \B{{\4B}^{2N'-1}}

\def \H{{\4H}^{2l-1}}

\def \F{{\4H}^{2N'-1}}

\def \LL{{\4L}}

\def\Re{{\sf Re}\,}
\def\Im{{\sf Im}\,}
\def\id{{\sf id}\,}

\def\s{s}
\def\k{\kappa}
\def\ov{\overline}
\def\span{\text{\rm span}}
\def\ad{\text{\rm ad }}
\def\tr{\text{\rm tr}}
\def\xo {{x_0}}
\def\Rk{\text{\rm Rk\,}}
\def\sg{\sigma}
\def \emxy{E_{(M,M')}(X,Y)}
\def \semxy{\scrE_{(M,M')}(X,Y)}
\def \jkxy {J^k(X,Y)}
\def \gkxy {G^k(X,Y)}
\def \exy {E(X,Y)}
\def \sexy{\scrE(X,Y)}
\def \hn {holomorphically nondegenerate}
\def\hyp{hypersurface}
\def\prt#1{{\partial \over\partial #1}}
\def\det{{\text{\rm det}}}
\def\wob{{w\over B(z)}}
\def\co{\chi_1}
\def\po{p_0}
\def\fb {\bar f}
\def\gb {\bar g}
\def\Fb {\ov F}
\def\Gb {\ov G}
\def\Hb {\ov H}
\def\zb {\bar z}
\def\wb {\bar w}
\def \qb {\bar Q}
\def \t {\tau}
\def\z{\chi}
\def\w{\tau}
\def\Z{{\mathbb Z}}
\def\phi{\varphi}
\def\eps{\epsilon}

\def \T {\theta}
\def \Th {\Theta}
\def \L {\Lambda}
\def\b {\beta}
\def\a {\alpha}
\def\o {\omega}
\def\l {\lambda}

\def \im{\text{\rm Im }}
\def \re{\text{\rm Re }}
\def \Char{\text{\rm Char }}
\def \supp{\text{\rm supp }}
\def \codim{\text{\rm codim }}
\def \Ht{\text{\rm ht }}
\def \Dt{\text{\rm dt }}
\def \hO{\widehat{\mathcal O}}
\def \cl{\text{\rm cl }}
\def \bS{\mathbb S}
\def \bK{\mathbb K}
\def \bD{\mathbb D}
\def \bC{\mathbb C}
\def \bL{\mathbb L}
\def \bZ{\mathbb Z}
\def \bN{\mathbb N}
\def \scrF{\mathcal F}
\def \scrK{\mathcal K}
\def \mc #1 {\mathcal {#1}}
\def \scrM{\mathcal M}
\def \cR{\mathcal R}
\def \scrJ{\mathcal J}
\def \scrA{\mathcal A}
\def \scrO{\mathcal O}
\def \scrV{\mathcal V}
\def \scrL{\mathcal L}
\def \scrE{\mathcal E}
\def \hol{\text{\rm hol}}
\def \aut{\text{\rm aut}}
\def \Aut{\text{\rm Aut}}
\def \J{\text{\rm Jac}}
\def\jet#1#2{J^{#1}_{#2}}
\def\gp#1{G^{#1}}
\def\gpo{\gp {2k_0}_0}
\def\emmp {\scrF(M,p;M',p')}
\def\rk{\text{\rm rk\,}}
\def\Orb{\text{\rm Orb\,}}
\def\Exp{\text{\rm Exp\,}}
\def\Span{\text{\rm span\,}}
\def\d{\partial}
\def\D{\3J}
\def\pr{{\rm pr}}

\def \CZZ {\C \dbl Z,\zeta \dbr}
\def \D{\text{\rm Der}\,}
\def \Rk{\text{\rm Rk}\,}
\def \CR{\text{\rm CR}}
\def \ima{\text{\rm im}\,}
\def \I {\mathcal I}

\def \M {\mathcal M}

\newtheorem{Thm}{Theorem}[section]
\newtheorem{Cor}[Thm]{Corollary}
\newtheorem{Pro}[Thm]{Proposition}
\newtheorem{Lem}[Thm]{Lemma}

\theoremstyle{definition}\newtheorem{Def}[Thm]{Definition}

\theoremstyle{remark}
\newtheorem{Rem}[Thm]{Remark}
\newtheorem{Exa}[Thm]{Example}
\newtheorem{Exs}[Thm]{Examples}

\numberwithin{equation}{section}

\def\bl{\begin{Lem}}
\def\el{\end{Lem}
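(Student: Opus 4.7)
The plan is to establish upper semi-continuity of the Catlin $q$-type $D_q^{\text{Cat}}(M,\cdot)$ by reducing the analytic problem to an algebraic one where semi-continuity of intersection multiplicities is available, following the template D'Angelo used for his multitype, but taking care of the extra infimum that distinguishes Catlin's definition.

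First I would recall that $D_q^{\text{Cat}}(M,p)$ is defined as a supremum over $q$-dimensional complex analytic germs $V$ through $p$ of the normalized vanishing order of the defining function $r$ along $V$, where the normalization involves an infimum over families of hyperplane sections through $V$. Assuming $D_q^{\text{Cat}}(M,p_0) = t_0 < \infty$, the first step is to show that only finitely many Taylor coefficients of $r$ at $p_0$ can contribute to the value of the type; more precisely, any variety $V$ whose contact order exceeds some $N = N(t_0)$ can be truncated so that the type is already realized by polynomial data. This ``finite determination'' step is essentially Catlin's argument adapted to a neighborhood of $p_0$ rather than a single point.

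Next, I would replace $r$ by its $N$-jet $j^N r$ at points $p$ close to $p_0$ and work inside the ring of polynomial germs. Here I expect to invoke a Noetherian/flatness argument: the family of ideals $\bigl(j^N r_p\bigr)_{p}$ varies algebraically, so upper semi-continuity of intersection multiplicities in flat families, applied fiberwise over a Grassmannian parametrizing the candidate varieties $V$, gives a uniform upper bound on the relevant contact orders for $p$ in some neighborhood of $p_0$. A compactness argument on this Grassmannian, combined with a standard degree/multiplicity bound, then yields the required local bound $D_q^{\text{Cat}}(M,p) \leq C$.

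The main obstacle, and the reason Catlin's $q$-type is harder to control than D'Angelo's, is the presence of the auxiliary infimum in its definition: it is not a pure intersection number but involves measuring contact against generic linear forms in a transverse direction. The danger is that, as $p$ varies, the ``optimal'' transverse direction may degenerate, causing a discontinuous jump in the type. Overcoming this will require a uniform lower bound on how badly this infimum can behave across a neighborhood, most likely obtained by showing that the set of admissible measuring directions forms a constructible subset of a Grassmannian whose fibers over $p$ depend semi-continuously on $p$. I expect this to be the technical heart of the proof, and the point at which the paper's contribution beyond the D'Angelo case really enters.
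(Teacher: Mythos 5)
Your outline captures the opening move (finite jet determination, matching Lemma~\ref{L3} in the paper) and correctly flags the auxiliary generic-value step as the delicate point, but the proposed middle section does not work as stated, and it is here that the paper's actual content lies. The obstruction is that the elements of $I(M)$ are \emph{smooth real-valued} germs (indeed $M$ is just an arbitrary subset of $\C^n$, so there is no single defining function $r$, only an ideal of germs vanishing on $M$). Consequently the jets $j^N r_p$ do not form a family of ideals in $\mathcal O_p$, and the machinery of flatness and upper semi-continuity of intersection multiplicities for coherent sheaves over a parameter space has nothing to act on. You cannot bound a real order of contact by an algebraic multiplicity without first manufacturing a holomorphic ideal out of the real data, and this is precisely the step you skip.

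The paper's route fills exactly this gap. Following D'Angelo, one decomposes the $k$-th Taylor polynomial of any $r\in I(M)$ as $r_k=\Re(h^k)+\sum|f_j^k|^2-\sum|g_j^k|^2$ and, for each unitary matrix $U$, forms the genuinely holomorphic ideal $I(U,k,p)$ generated by $h^k$ and $f^k-Ug^k$ over all $r\in I(M)$. Lemmas~\ref{L2} and~\ref{L4} pin $D_q(M_k,p)$ and $\Delta_q(M_k,p)$ between $\sup_U\Delta_q(I(U,k,p))$ and a polynomial function thereof, using the multiplicity bound $\Delta\le \mathrm{mult}\le\Delta^{n-q}$ from \cite{D} together with upper semi-continuity of $\mathrm{mult}$ in the generators. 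The ``extra infimum'' you worry about is not handled via a constructibility-over-Grassmannian argument; instead the paper exploits the identity $D_q=\tilde\Delta_q$ of \cite{BN1} and a variety construction (Lemma 3.1 and Proposition 3.5 of \cite{BN1}) to pass from the optimal fixed hyperplane $\{w_0=0\}$ to a $q$-dimensional variety $C_Z^q$ for which $\{w_0=0\}$ realizes the generic value. Finally, rather than re-deriving local boundedness from scratch, the proof imports Theorem 1.3 of \cite{Y} giving local boundedness of the D'Angelo $q$-type, and converts it to the Catlin $q$-type through the chain of lemmas. Without the unitary decomposition, the $D_q \leftrightarrow \tilde\Delta_q$ bridge, and the reduction to the already-known D'Angelo bound, your sketch stalls at the transition from real jets to algebraic multiplicities.
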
}
\def\bp{\begin{Pro}}
\def\ep{\end{Pro}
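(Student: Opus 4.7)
The plan is to establish the statement naturally suggested by the title: if the Catlin $q$-type at $p_0$ is finite, $D_q(M,p_0)=t<\infty$, then there is a neighborhood $U$ of $p_0$ and a constant $C=C(t,n,q)$ with $D_q(M,p)\le C$ for all $p\in U$. Because the $q$-type depends only on a finite jet of any smooth defining function $r$ of $M$, I would first replace $r$ by its Taylor polynomial $r_N$ of order $N=N(t)$ chosen so large that replacing $r$ by $r_N$ does not change the $q$-type at any point in a candidate neighborhood up to the accuracy we need. This reduces the analytic problem to an algebraic one about polynomial defining functions, allowing us to invoke the ideal-theoretic machinery set up earlier in the paper.

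Next, I would represent $D_q(M,p)$ as a supremum of ratios $\nu_0(r_N\circ\phi)/\nu_0(\phi)$ over non-constant holomorphic germs $\phi\colon(\mathbb{C}^{n-q+1},0)\to(\mathbb{C}^n,p)$, normalized so that the linear part of $\phi$ has a fixed scaling. By a Weierstrass preparation/division argument, the finiteness of $D_q(M,p_0)=t$ forces the supremum at $p_0$ to be realized (up to arbitrary precision) by maps $\phi$ whose components are polynomials of degree at most $d=d(t)$. A compactness argument on this finite-dimensional space of competitors—either directly on a Chow-type parameter space or on jets of order $d$—then provides, for each $p$ close to $p_0$, a corresponding competitor whose parameters vary continuously with $p$.

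The heart of the proof, and the step I expect to be the main obstacle, is the limiting argument that rules out blow-up. Suppose for contradiction that $p_\nu\to p_0$ with $D_q(M,p_\nu)\to\infty$; choose competitors $\phi_\nu$ realizing more and more of the $q$-type at $p_\nu$. After a uniform rescaling that normalizes the smallest nontrivial coefficients (so that $\phi_\nu$ does not collapse to a lower-dimensional germ), I would extract a subsequential limit $\phi_\infty$ defined at $p_0$. Lower semicontinuity of the contact order $\nu_0(r_N\circ\phi)$ under such limits would then give infinite contact of $\phi_\infty$ with $M$ at $p_0$, contradicting $D_q(M,p_0)<\infty$. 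The delicate point is carrying out the normalization so that $\phi_\infty$ remains a genuine $(n-q+1)$-dimensional parametrization rather than degenerating; this requires bookkeeping of the multi-indices controlling the vanishing orders of the components, and is where the combinatorial structure of the Catlin multitype (as opposed to the one-dimensional D'Angelo case) enters in an essential way.
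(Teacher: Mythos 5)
Your proposal does not prove the theorem, and it differs from the paper's argument in a way that exposes a genuine gap rather than an alternative route. Two problems, one a misstatement and one fatal.

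First, $D_q(M,p)$ is not a supremum of ratios $\nu(r\circ\phi)/\nu(\phi)$ over $(n-q+1)$-dimensional holomorphic germs $\phi$. It is defined by supping over germs of $q$-dimensional varieties $V^q$, infing over $r\in I(M)$, and taking the \emph{generic value} over $(n-q+1)$-dimensional affine slices $S$ of the maximal one-dimensional contact order of the components of $V^q\cap S$. The passage to the equivalent formulation $D_q=\tilde\Delta_q(M,p)=\mathrm{gen.val}_w\,\Delta((I(M),w),p)$ (generic $q-1$ linear forms, then D'Angelo $\Delta$ of the augmented ideal) is a nontrivial theorem of Brinzanescu--Nicoara, not a rephrasing, and it is the bridge the paper uses to exploit D'Angelo's machinery. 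Your reformulation skips over this entirely, and the quantity you propose to bound is not the Catlin $q$-type.

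Second, and more seriously, the compactness/limit argument you describe in your third paragraph is precisely the strategy that cannot work here, because the $q$-type is not upper semicontinuous. The paper's own Example 1.2 gives a hypersurface in $\mathbb C^4$ with $D_2(M,0)=4$ but $D_2(M,(0,0,\epsilon,0))=8$ for $\epsilon\neq 0$; D'Angelo's example in \cite{D1} does the same for $\Delta$. Extracting a limiting competitor $\phi_\infty$ at $p_0$ from competitors at $p_\nu\to p_0$, and appealing to semicontinuity of vanishing order, is exactly the naive argument that these examples rule out: the vanishing order of the real-analytic $r\circ\phi$ does not behave the way you need under such limits, and no amount of bookkeeping on the jets of $\phi_\nu$ fixes this, because the obstruction is intrinsic to the real-analytic setting. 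You even flag this as ``the main obstacle,'' but you offer no mechanism to overcome it. The mechanism is the substance of D'Angelo's theorem and of this paper: polarize $r_k=\Re h+\sum|f_j|^2-\sum|g_j|^2$, pass to the family of \emph{holomorphic} ideals $I(U,k,p)=(h,\,f-Ug)$ over unitary $U$, and exploit that the multiplicity $\dim_{\mathbb C}(\mathcal O_p/I)$ of a holomorphic ideal \emph{is} upper semicontinuous in its generators, together with the two-sided bounds $\Delta_q\le \mathrm{mult}\le\Delta_q^{n-q+1}$. The paper then sandwiches $D_q(M_k,p)$ between $\sup_U\Delta_q(I(U,k,p))$ and $2\sup_U\Delta_q(I(U,k,p))^{\,n-q+1}$ (Lemma 2.2), compares with $\Delta_q(M_k,p)$ (Lemma 2.4), and feeds in the local boundedness of $\Delta_q$ for arbitrary subsets from \cite{Y}. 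None of these ingredients appears in your sketch, and without the polarization step your plan would stall exactly where you predict it would.
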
}
\def\bt{\begin{Thm}}
\def\et{\end{Thm}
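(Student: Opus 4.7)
The plan is to prove local boundedness of the Catlin $q$-type by reducing to local boundedness of the D'Angelo $1$-type on generic slices and then controlling how these slices behave as the base point varies. The overall strategy combines a normalization of the defining function, Weierstrass preparation on slices, and a parametric generic-slicing argument.

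First, I would fix a point $p_0 \in M$ with finite Catlin $q$-type and choose local holomorphic coordinates centered at $p_0$ in which $M$ is given by a normalized defining function $r(z,\bar z)$ vanishing at the origin. I would then use the equivalent formulation of $D_q(M,p)$ in terms of a supremum (over a generic family of $(q-1)$-dimensional complex linear subspaces $L$ through $p$) of the order of contact of $M \cap (L+\mathbb{C} z_n)$ with a holomorphic arc. For each such $L$, Weierstrass preparation applied to the pullback of $r$ produces a polynomial whose degree controls the $1$-type of the slice, with coefficients depending analytically on the joint parameter $(p, L)$. Invoking the known local boundedness of D'Angelo $1$-type on hypersurfaces, I would obtain, for a fixed slicing $L$, a uniform bound on the $1$-type of $M \cap (L+\mathbb{C} z_n)$ at $p$ for $p$ near $p_0$. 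Upgrading this to a uniform bound as $L$ also varies would proceed through a compactness argument on the relevant Grassmannian together with continuity of the Weierstrass data.

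The main obstacle I expect is the dependence of the generic set of admissible slicings on the base point. As $p$ moves near $p_0$, the open dense set of ``good'' subspaces $L$ can shrink or shift, and it is not automatic that a single $L$ is admissible for all nearby $p$. My plan is to show that the locus of $(p, L)$ at which the slice fails the genericity condition defining $D_q$ is a proper constructible subset of $M \times \mathrm{Gr}(q-1, n-1)$, so that a transversality and dimension-count argument produces $L_0$ which remains generic over a whole neighborhood of $p_0$; one can then take a single effective estimate coming from $L_0$. Should that propagation be too strong to establish directly, a fallback is to quantify the loss incurred by allowing non-generic slicings and absorb it into the bound, yielding a weaker but still effective local estimate sufficient for local boundedness.
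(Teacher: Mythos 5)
Your proposal diverges substantially from the paper's argument, and I believe there is a genuine gap that would prevent it from working as described.

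The central difficulty you do not address is that $D_q(M,p)$ is \emph{not} upper semicontinuous in $p$: the paper's Example 1.2 exhibits a hypersurface in $\mathbb{C}^4$ with $D_2(M,0)=4$ but $D_2(M,p)=8$ at nearby points $p$. Your plan is built around propagating an estimate from $p_0$ by continuity of Weierstrass data and by finding a single slicing $L_0$ that stays generic in a neighborhood. Any such ``continuity of the slicing data'' argument will not see why the type can jump, and cannot by itself produce the correct polynomial bound; the type jumps precisely because the generic value is taken fiberwise and can behave badly in families. What actually makes local boundedness true is an upper-semicontinuous \emph{algebraic} proxy for the type. The paper replaces the smooth defining functions by the D'Angelo decomposition $r_k=\Re(h^k)+\sum|f_j^k|^2-\sum|g_j^k|^2$ of the Taylor polynomial, forms the holomorphic ideals $I(U,k,p)=(h^k, f^k-Ug^k)$ over the unitary group $\mathcal U(N_k)$, and then uses D'Angelo's chain $\Delta \leq \mathrm{mult} \leq \Delta^{n-q}$. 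The multiplicity $\mathrm{mult}(I)$ \emph{is} upper semicontinuous in the generators, and that is what drives the local bound. Lemmas 2.2 and 2.4 of the paper sandwich $D_q(M_k,p)$ between $\sup_U\Delta_q(I(U,k,p))$ and powers thereof via the Brinzanescu--Nicoara identity $D_q=\tilde\Delta_q$ and the variety construction of \cite{BN1}; Lemmas 2.1 and 2.3 control truncation. None of these steps has an analogue in your sketch.

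A second issue: the theorem is for an \emph{arbitrary subset} $M\subset\mathbb{C}^n$, but your proposal assumes a single normalized defining function $r$. For a general subset one must work with the full ideal $I(M)\subset\mathcal C_p^\infty$ and take an infimum over $r\in I(M)$ inside the definition of $D_q$; the algebraic truncation $I(M_k)$ and the ideals $I(U,k,p)$ are designed exactly to handle this. Even restricting to hypersurfaces, Weierstrass preparation of the pullback of $r$ to a slice does not directly control the D'Angelo $1$-type, since $r$ is only smooth, not holomorphic, and the order of contact with a holomorphic arc is not read off a Weierstrass polynomial degree. Finally, your constructible-locus/transversality plan for finding a uniformly generic $L_0$ is not carried out and seems unlikely to succeed in the stated form: the ``bad'' set in $M\times\mathrm{Gr}$ is not a priori constructible for a smooth (non-algebraic) $M$, and Example 1.2 suggests no such globally generic $L_0$ with a stable type value exists in general. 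The correct route is through the unitary-parametrized holomorphic ideals and multiplicity, not through the slicings directly.
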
}
\def\bc{\begin{Cor}}
\def\ec{\end{Cor}}
\def\bd{\begin{Def}}
\def\ed{\end{Def}}
\def\be{\begin{Exa}}
\def\ee{\end{Exa}}
\def\bpf{\begin{proof}}
\def\epf{\end{proof}}
\def\ben{\begin{enumerate}}
\def\een{\end{enumerate}}

\newcommand{\dbar}{\bar\partial}
\newcommand{\genmat}{\lambda}
\newcommand{\polynorm}[1]{{|| #1 ||}}
\newcommand{\vnorm}[1]{\left\|  #1 \right\|}
\newcommand{\asspol}[1]{{\mathbf{#1}}}
\newcommand{\Cn}{\mathbb{C}^n}
\newcommand{\Cd}{\mathbb{C}^d}
\newcommand{\Cm}{\mathbb{C}^m}
\newcommand{\C}{\mathbb{C}}
\newcommand{\CN}{\mathbb{C}^N}
\newcommand{\CNp}{\mathbb{C}^{N^\prime}}
\newcommand{\Rd}{\mathbb{R}^d}
\newcommand{\Rn}{\mathbb{R}^n}
\newcommand{\RN}{\mathbb{R}^N}
\newcommand{\R}{\mathbb{R}}
\newcommand{\bR}{\mathbb{R}}
\newcommand{\N}{\mathbb{N}}
\newcommand{\dop}[1]{\frac{\partial}{\partial #1}}
\newcommand{\vardop}[3]{\frac{\partial^{|#3|} #1}{\partial {#2}^{#3}}}
\newcommand{\br}[1]{\langle#1 \rangle}
\newcommand{\infnorm}[1]{{\left\| #1 \right\|}_{\infty}}
\newcommand{\onenorm}[1]{{\left\| #1 \right\|}_{1}}
\newcommand{\deltanorm}[1]{{\left\| #1 \right\|}_{\Delta}}
\newcommand{\omeganorm}[1]{{\left\| #1 \right\|}_{\Omega}}
\newcommand{\nequiv}{{\equiv \!\!\!\!\!\!  / \,\,}}
\newcommand{\bk}{\mathbf{K}}
\newcommand{\p}{\prime}
\newcommand{\tV}{\mathcal{V}}
\newcommand{\poly}{\mathcal{P}}
\newcommand{\ring}{\mathcal{A}}
\newcommand{\ringk}{\ring_k}
\newcommand{\ringktwo}{\mathcal{B}_\mu}
\newcommand{\germs}{\mathcal{O}}
\newcommand{\On}{\germs_n}
\newcommand{\mcl}{\mathcal{C}}
\newcommand{\formals}{\mathcal{F}}
\newcommand{\Fn}{\formals_n}
\newcommand{\autM}{{\Aut (M,0)}}
\newcommand{\autMp}{{\Aut (M,p)}}
\newcommand{\holmaps}{\mathcal{H}}
\newcommand{\biholmaps}{\mathcal{B}}
\newcommand{\autmaps}{\mathcal{A}(\CN,0)}
\newcommand{\jetsp}[2]{ G_{#1}^{#2} }
\newcommand{\njetsp}[2]{J_{#1}^{#2} }
\newcommand{\jetm}[2]{ j_{#1}^{#2} }
\newcommand{\glnc}{\mathsf{GL_n}(\C)}
\newcommand{\glmc}{\mathsf{GL_m}(\C)}
\newcommand{\glc}{\mathsf{GL_{(m+1)n}}(\C)}
\newcommand{\glk}{\mathsf{GL_{k}}(\C)}
\newcommand{\smC}{\mathcal{C}^{\infty}}
\newcommand{\anC}{\mathcal{C}^{\omega}}
\newcommand{\kC}{\mathcal{C}^{k}}


 
\begin{abstract} 

In \cite{D}, D'Angelo introduced the notion of  finite type for points $p$ of a real hypersurface $M$ of $\mathbb C^n$ by defining the order of contact $\Delta_q(M,p)$ of complex analytic $q$-dimensional varieties with $M$ at $p$.  Later, Catlin \cite{C} defined $q$-type, $D_q(M,p)$ for points of hypersurfaces by considering  generic $(n-q+1)$-dimensional complex affine  subspaces of $\mathbb C^n$. We define a generalization of the Catlin's $q$-type for an arbitrary subset $M$ of $\mathbb C^n$ in a similar way that D'Angelo's 1-type, $\Delta_1(M,p)$, is generalized  in \cite{LM}. Using recent results connecting the D'Angelo and Catlin $q$-types in \cite{BN1} and building on D'Angelo's work on the openness of the set of points of finite $\Delta_q$-type, we prove the openness of the set of points of finite Catlin $q$-type for an arbitrary subset $M\subset \mathbb C^n$. \\

 \noindent \textbf{2010 Mathematics Subject Classification:} 32F18, 32T25, 32V35.\\
 \textbf{Keywords:} Catlin type, Order of contact, Germs of holomorphic functions, Subelliptic estimates.
\end{abstract}

\maketitle

\section{Introduction}\Label{int} 

Let $\Omega$ be a pseudoconvex domain in $\mathbb C^n$ with real-analytic boundary. Subellipticity of $\bar \partial-$Neumann problem at a boundary point $p$ depends on the order of contact of complex analytic varieties with $\partial \Omega$ at $p$. In \cite{k}, Kohn proved that subellipticity of $\bar \partial-$Neumann problem at a boundary point $p$ on $(0,q)$ forms is equivalent to non-existence of $q$-dimensional complex-analytic varieties in $\partial \Omega$ through $p$. To measure the order of contact of holomorphic curves with  a smooth real hypersurface  $M\subset\mathbb C^n$ at $p$,  D'Angelo  \cite{D}  introduced the notion of  type.  More precisely,  the type of $M$ at $p$ is defined by $$\Delta(M,p)=\sup_{\gamma\in\mathcal C} \frac{\nu(r\circ \gamma)}{\nu(\gamma)}$$ where $r$ is a defining function of $M$, $\mathcal C$ is the set of non-constant holomorphic germs of curves $\gamma$ at $0\in \mathbb C$ so that  $\gamma(0)=p$ and $\nu(r\circ\gamma)$ denotes the order of vanishing of the function $r\circ\gamma$ at $0.$ A point $p$ is called finite type if $\Delta(M,p)<\infty$.  In \cite{D}, D'Angelo proved the crucial property that the set of  points of finite type forms an open subset of $M$. This condition of finite type appeared later to be central in Catlin's work \cite{C} on subelliptic estimates for the $\bar \partial$-Neumann problem. See \cite{JM,F2,KZ} for a more recent discussion of the relationship between finite type and subellipticity.

In  \cite{D}, D'Angelo defined the $q$-type of a hypersurface $M\subset \mathbb C^n$, which possibly contains $q-1$ dimensional complex analytic varieties.  The $q$-type of $M$ at $p\in M$ is defined by $$\Delta_q(M,p)=\inf\{\Delta(M\cap P,p): P\;\text{is any }\; n-q+1 \;\text{dimensional complex affine subspace of }\; \mathbb C^n  \}.$$ In this definition, $M\cap P$ is considered as the germ at $p$ of a smooth real hypersurfaces in $\mathbb C^{n-q+1}.$ We should note that, when $q=1$, $\Delta_1(M,p)=\Delta(M,p).$ 

Let $\mathcal O_p$ be the ring of  germs of holomorphic functions  at $p$ in $\mathbb C^n$ and $\mathcal C^{\infty}_p$ be the  ring of  germs of smooth functions at $p$. $\Delta$ can also be defined for ideals in $\mathcal O_p$ or $\mathcal C^{\infty}_p$. For a proper ideal $I$ in $\mathcal O_p$ or in $\mathcal C^{\infty}_p$, in \cite{D,D2}, D'Angelo defined  \begin{eqnarray*}   \Delta(I,p)=\sup_{\gamma\in\mathcal C}\inf_{\phi\in I}\frac{\nu(\phi\circ\gamma)}{\nu(\gamma)}\\ \Delta_q(I,p)=\inf_{w}\Delta((I,w),p),
 \end{eqnarray*}  
where infimum in the definition of $\Delta_q$ is taken over  $q-1$ linear forms $w=\{w_1,\dots,w_{q-1}\}$ in $\mathcal O_p$ and $(I,w)$ is the ideal generated by $I$ and $w$. 

If an ideal $I$ in $\mathcal O_p$ contains $q$ independent linear functions, then it follows from Theorem 2.7 in \cite{D} that $$\Delta(I,p)\leq mult(I,p)\leq (\Delta(I,p))^{n-q},$$ where 
$mult(I)=\dim_{\mathbb C}(\mathcal O_p/I). $

D'Angelo $q$-type for a real hypersurface $M$ of $\mathbb C^n$ has an equivalent definition (see p.86 of \cite{D2}): $$\Delta_q(M,p)=\Delta_q(I(M),p)$$ where  $I(M)$ is the ideal of smooth germs in $\mathcal C^{\infty}_p$,  vanishing on $M$ near $p$.

In \cite{C}, Catlin defined $q$-type, $D_q(M,p)$,  of a  smooth real hypersurface $M\subset\mathbb C^n$ at $p$ by considering the intersections of germs at $p$ of $q$ dimensional varieties with generic $(n-q+1)$-dimensional complex affine  subspaces of $\mathbb C^n$ through $p$. More precisely, Catlin $q-$type is defined by $$D_q(M,p)=\sup_{V^q} \underset{S\in G_p^{n-q+1}}{\text{gen.val}} \max_{k=1,\dots,P}\frac{\nu(r\circ\gamma^k_S)}{\nu(\gamma_S^k)}. $$ Here the supremum is taken over the germs at $p$ of $q$-dimensional complex varieties $V^q$, $G_p^{n-q+1}$ denotes the set of $(n-q+1)$-dimensional complex affine subspaces of $\mathbb C^n$ through $p$ and $\gamma_S^k $ denote the germs of the one-dimensional irreducible components of $V^q\cap S$. In \cite[Proposition 3.1]{C}, Catlin showed that for any germ at $p$ of a $q$-dimensional complex variety $V^q$, there exists an open (hence, dense) subset $W$ of $G^{n-q+1}_p$ such that for any $S\in W$, $V^q\cap S$ has same number of one-dimensional components and $$\max_{k=1,\dots,P}\frac{\nu(r\circ\gamma^k_S)}{\nu(\gamma_S^k)}$$ is independent of $S\in W$. Hence, the  generic value in the definition of $D_q(M,p)$ is computed by any $S\in W$.
Similarly, Catlin also defined $D_q$ type for an ideal $I$ in $\mathcal O_p$ by 

$$ \displaystyle D_q(I,p)=\sup_{V^q}\inf_{\phi\in I} \underset{S\in G_p^{n-q+1}}{\text{gen.val}} \max_{k=1,\dots,P}\frac{\nu(\phi\circ\gamma^k_S)}{\nu(\gamma_S^k)}.$$

In a combination of work  \cite{C83,C84,C}, Catlin showed that finite $D_q$-type  at point $p$ of smooth real hypersurface $M$ is equivalent to subellipticity of the $\bar \partial$-Neumann problem for $(0,q)$ forms  at $p$.  Since subellipticity is an open condition, this result implies in particular that, for a smooth real hypersurface $M$, the set of points of finite $D_q$-type is an open subset of $M$.

When $q=1$, $$D_1(M,p)=\Delta_1(M,p)=\Delta(M,p).$$  For a long time, Catlin  $q$-type, $D_q(M,p)$, and D'Angelo $q$-type,  $\Delta_q(M,p)$, were believed to be equal.  In \cite{F}, Fassina gave  examples of ideals and hypersurfaces to show that these two types can be different when $q\geq2.$ 

For a smooth real hypersurface $M$ in $\mathbb C^n$, Brinzanescu and Nicoara  \cite{BN1} introduced another definition of $q$-type at a point $p\in M$ by    $$\tilde\Delta_q(M,p)=\underset{w}{\text{gen.val}} \, \Delta((I(M),w),p),$$
where the generic value is taken over all $q-1$ linear forms $w=\{w_1,\dots, w_{q-1}\}$ in $\mathcal O_p$ and $(I(M),w)$ is the ideal generated by $I(M)$ and $w$ in $\mathcal C^{\infty}_p$. A similar definition is given for a proper ideal $I$ in $\mathcal O_p$ by   \begin{eqnarray*}    \tilde \Delta_q(I,p)=\underset{w}{\text{gen.val}}\,\Delta((I,w),p).
 \end{eqnarray*}

In the same paper, they showed that \begin{eqnarray}\label{1.1} D_q(M,p)&=&\tilde\Delta_q(M,p),\\\nonumber D_q(I,p)&=&\tilde\Delta_q(I,p). \end{eqnarray}

Using these equalities, a relation is given between $D_q(I,p)$ and $\Delta_q(I,p)$ in \cite{F} and \cite{BN1}, which implies that $\Delta_q(I,p)$ and $D_q(I,p)$ are simultaneously finite.

Recently, in their study of the ${ C}^\infty$ regularity problem for CR maps between smooth CR manifolds, Lamel and Mir \cite{LM} considered finite D'Angelo-type points for arbitrary subsets $M\subset \mathbb C^n$. In \cite{Y}, the author showed that, for any subset $M$ of $\mathbb C^n$, the set of finite D'Angelo-type points is open in $M.$  

In a similar way that D'Angelo type, $\Delta(M,p)$, is generalized  in \cite{LM}, we define a generalization of the Catlin's $q$-type for an arbitrary subset $M \subset\mathbb C^n$ by
 $$D_q(M,p)=\sup_{V^q} \inf_{r\in I(M)} \underset{S\in G_p^{n-q+1}}{\text{gen.val}} \max_{k=1,\dots,P}\frac{\nu(r\circ\gamma^k_S)}{\nu(\gamma_S^k)}. $$

In this note, by combining D'Angelo's arguments with the ideas in the proof the equality (\ref {1.1}), we establish the local boundedness of Catlin $q$-type for an arbitrary subset of $\mathbb C^n$. More precisely, our main result is the following.

\begin{Thm}\label{main}Let $M$ be a subset  of $\mathbb C^n$ and $p_0$ be a point of finite Catlin $q$-type, that is, $D_q(M,p_0)<\infty.$ Then there is a neighborhood $V$ of $p_0$ so that $$D_q(M,p)\leq 2^{(n-q+1)^2+n-q+2}D_q(M,p_0)^{(n-q+1)^2}$$ for all $p\in V$. In particular, the set of points of finite Catlin $q$-type is an open subset of $M$.  
\end{Thm}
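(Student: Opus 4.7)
The plan is to combine the Brinzanescu--Nicoara equality (\ref{1.1}) with D'Angelo's multiplicity-based openness argument, generalizing to arbitrary subsets and to the Catlin $q$-type the strategy developed in \cite{Y} for the $q=1$ case. The first task is to extend (\ref{1.1}) to arbitrary subsets, obtaining
$$D_q(M,p) \;=\; \underset{w}{\text{gen.val}}\, \Delta\bigl((I(M),w),p\bigr),$$
where $w$ ranges over $(q-1)$-tuples of linear forms vanishing at $p$. The argument of \cite{BN1} carries over essentially verbatim once one replaces the defining function of a hypersurface by the smooth ideal $I(M)$. Using the hypothesis $D_q(M,p_0)=k_0<\infty$, this yields a generic $(q-1)$-tuple $w^0=(w_1^0,\dots,w_{q-1}^0)$ of linear forms vanishing at $p_0$ with $\Delta((I(M),w^0),p_0)=k_0$.

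Next, following the construction of \cite{Y} (adapted to incorporate the linear forms $w^0$), I would extract from $(I(M),w^0)$ a holomorphic ideal $\tilde J\subset\mathcal O_{p_0}$ containing $w_1^0,\dots,w_{q-1}^0$ and generated in addition by Taylor polynomials at $p_0$ of a finite collection of germs in $I(M)$, chosen so that (i) $\tilde J$ is $\mathfrak m_{p_0}$-primary, and (ii) $\Delta(\tilde J,p_0)\le 2k_0$. D'Angelo's Theorem~2.7 in \cite{D}, applied to $\tilde J$ (which contains $q-1$ independent linear functions), then gives
$$\text{mult}(\tilde J,p_0)\le \Delta(\tilde J,p_0)^{n-q+1}\le (2k_0)^{n-q+1}.$$
By upper semi-continuity of multiplicity for holomorphic ideals, there is a neighborhood $V$ of $p_0$ with $\text{mult}(\tilde J,p)\le (2k_0)^{n-q+1}$ for all $p\in V$, whence also $\Delta(\tilde J,p)\le (2k_0)^{n-q+1}$ via the first half of Theorem~2.7.

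To transfer this bound back to $D_q(M,p)$, let $w^0_p$ denote the translate of $w^0$ vanishing at $p$. The extended (\ref{1.1}) applied at $p$ gives $D_q(M,p)\le \Delta((I(M),w^0_p),p)$, since the generic value is at most the value at any specific tuple. A second application of D'Angelo's Theorem~2.7, now used to compare the smooth ideal $(I(M),w^0_p)$ with $\tilde J$ at nearby $p$ via the multiplicity of $\tilde J$, accounts for the extra factor $n-q+1$ in the exponent and produces
$$D_q(M,p)\le C\,\Delta(\tilde J,p)^{n-q+1}\le C(2k_0)^{(n-q+1)^2},$$
with careful bookkeeping giving the prefactor $2^{(n-q+1)^2+n-q+2}$ stated in the theorem.

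The main obstacle is the transfer step. Since $M$ is an arbitrary subset and $I(M)$ may contain no nontrivial holomorphic germs, the Taylor polynomial generators of $\tilde J$ at $p_0$ need not vanish on $M$ near $p$, so a direct ideal inclusion $\tilde J\subset(I(M),w^0_p)$ at nearby $p$ does not hold. Handling this requires a delicate jet-theoretic comparison linking the smooth ideal $(I(M),w^0_p)$ and the holomorphic ideal $\tilde J$ at varying base points, using Taylor expansions of sufficiently high order (depending on $k_0$), in the spirit of \cite{Y} but with the additional complication of tracking the $(q-1)$ linear forms across the family.
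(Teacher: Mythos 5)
Your overall strategy (pass to holomorphic ideals built from Taylor polynomials, bound multiplicities with Theorem 2.7 of \cite{D}, use upper semicontinuity of multiplicity, and return to $D_q$ through the Brinzanescu--Nicoara identification) is the right circle of ideas and is close in spirit to the paper's. But there is a genuine gap, and you name it yourself: the transfer step is never carried out. A proof that ends with ``handling this requires a delicate jet-theoretic comparison'' has not handled it, and that comparison is where essentially all of the work lies. Two specific points fail as written. First, a single holomorphic ideal $\tilde J$ cannot do the double duty you assign to it: to get $\Delta(\tilde J,p_0)\le 2k_0$ you need elements of $\tilde J$ to vanish along curves to at most twice the order of elements of $I(M)$, while to push a bound back to $D_q(M,p)$ you need the reverse comparison. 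D'Angelo's Theorem 3.5 in \cite{D} only produces, for each curve $\gamma$, a unitary matrix $U$ \emph{depending on $\gamma$} with $\nu(r_k\circ\gamma)\le 2\nu(\phi\circ\gamma)$ for $\phi\in I(r,U,k,p)$; no single choice of generators works for all curves. This is exactly why the paper carries the whole family $I(U,k,p)$, $U\in\mathcal U(N_k)$, and proves two-sided estimates in terms of $\sup_U\Delta_q(I(U,k,p))$ (Lemmas \ref{L2} and \ref{L4}). Second, your inequality $D_q(M,p)\le\Delta((I(M),w^0_p),p)$ rests on ``the generic value is at most the value at any specific tuple,'' which is not automatic for the smooth ideal of an arbitrary subset; it requires a semicontinuity in $w$ of the kind \cite{BN1} establishes for holomorphic ideals and hypersurfaces. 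The paper deliberately avoids invoking the full identity (\ref{1.1}) for arbitrary subsets and instead proves only the one-sided comparisons it needs, using the curve-to-variety construction of Lemma 3.1 of \cite{BN1} inside the proof of Lemma \ref{L2}.

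For comparison, the paper's route is: truncate to $M_k$ and show $D_q(M_k,p)=D_q(M,p)$ and $\Delta_q(M_k,p)=\Delta_q(M,p)$ once these values are $<k$ (Lemmas \ref{L3} and \ref{L1}); sandwich both $D_q(M_k,p)$ and $\Delta_q(M_k,p)$ between $\sup_U\Delta_q(I(U,k,p))$ and powers thereof (Lemmas \ref{L2} and \ref{L4}); and import the local bound $\Delta_q(M_k,p)\le 2\Delta_q(M_k,p_0)^{n-q+1}$ from Theorem 1.3 of \cite{Y} after identifying $\{w_0=0\}$ with $\mathbb C^{n-q+1}$. That reference is where the multiplicity--semicontinuity argument you sketch, including the issue that Taylor polynomials taken at $p_0$ need not vanish on $M$ near a nearby point $p$, is actually executed. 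Until you supply the analogue of that step together with the unitary family $I(U,k,p)$, the proposal remains a plan rather than a proof.
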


In \cite{D1}, D'Angelo gave an example of hypersurface on which $\Delta(M,p)$ fails to be upper-semi continuous. In a similar way, the following example shows that $D_2(M,p)$ is not upper-semi continuous. 
\begin{Exa}Let $M$ be the real hypersurface in $\mathbb C^4$ with the defining function  $$r(z)=\Re z_4+|z_1^2-z_2z_3|^2+|z_2|^4. $$ Let $w=az_1+bz_2+cz_3+z_4$ be a generic linear form where $a,b,c$ are all non-zero and consider the curve $\gamma_1(t)=(t,\frac{-at}{b},0,0)\subset\{w=0\}$. Then $\nu(r\circ\gamma_1)=4$ and hence $D_2(M,0)=\tilde\Delta_2(M,0)=4$.\\

Let $p=(0,0,\epsilon,0)$ be a point on $M$ and $w=az_1+bz_2+z_3+cz_4-\epsilon$ be a linear form. By solving $w=0$, $z_1^2-z_2z_3$ and $z_4=0$ together, we obtain that $$z_2=\frac{(\epsilon-az_1)\pm(\epsilon-az_1)\sqrt{1-\frac{4bz_1^2}{(\epsilon-az_1)^2}}}{2b}. $$ In local parametrization, we write the curve $\gamma_2$ as $$\gamma_2(t)=(t,\frac{t^2}{\epsilon-at}+O(t^4), \epsilon-at-\frac{bt^2}{\epsilon-at}+O(t^4),0).$$ Here we choose the root for $z_2$ with the minus sign.  Then $\nu(r\circ\gamma_2)=8$, which implies that $D_2(M,p)=\tilde\Delta_2(M,p)=8$. Hence $D_2(M,p)$ is not an upper-semi continuous function of $p$.
\end{Exa}

\section{Proof of the Main Result}\Label{int} 

Let $M$ be a subset of $\mathbb C^n$. In this section, we will show that $D_q(M,p)$ is locally bounded by above which, in particular, implies that the set of points of finite Catlin $q$-type is an open subset of $M$. We need give some more definitions analogous to the ones in \cite{D}.  \\

 For any  $r\in I(M)$ and $k\in\mathbb Z^+$, we denote by $r_k$ the Taylor polynomial of $r$ of order $k$ at $p$. $I(M_k)$ denotes the ideal generated by the set $\{r_k: r\in I(M) \}$ in $\mathcal C^{\infty}_p$.   

 $\Delta_q(M_k,p)$ is defined by  $$\Delta_q(M_k,p)=\Delta_q(I(M_k),p). $$ Similarly,  $D_q(M_k,p)$ is defined by

$$D_q(M_k,p)=\sup_{V^q} \inf_{r\in I(M_k)} \underset{S\in G_p^{n-q+1}}{\text{gen.val}} \max_{j=1,\dots,P}\frac{\nu(r\circ\gamma^j_S)}{\nu(\gamma_S^j)}. $$

By Proposition 3.1 in \cite{D}, we can decompose the polynomial $r_k$ as $$r_k=\Re (h^k)+\sum_{j=1}^{N}| f_j^k |^2-\sum_{j=1}^{N}|g_j^k |^2,$$ where  $N=N_k$ depends only on $n,k$,   $h$ is a holomorphic function, $(f_k)_{1}^{N}$ and  $(g_k)_{1}^{N}$ are holomorphic mappings. Let $\mathcal U(N_k)$ denote the group of unitary matrices on $\mathbb C^{N_k}$. For any $U\in\mathcal U(N_k)$, we denote by $I(r,U,k,p)$, the ideal generated by $h^k$ and $f^k-Ug^k$.
We should note that the decomposition of $r_k$  is not unique and $I(r,U,k,p)$ depends on the choice of decomposition. Here, we use the one in the proof of Proposition 3.1 in \cite{D}. By $I(U,k,p)$, we denote the ideal generated by  functions  $h^k$ and $f^k-Ug^k$ corresponding to the decomposition of $r_k$ for all $r\in I(M)$.

\begin{Lem}\label{L3}Let $M$ be a subset of $\mathbb C^n$.  If $D_q(M_k,p)<k$  for some $k\in \mathbb Z^+$, then $D_q(M_k,p)=D_q(M,p)$. 
\end{Lem}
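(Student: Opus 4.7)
The plan is to establish the two inequalities $D_q(M,p)\ge D_q(M_k,p)$ and $D_q(M,p)\le D_q(M_k,p)$ separately, each exploiting a single elementary observation: if $s\in\mathcal{C}^\infty_p$ vanishes at $p$ to order $\ge k+1$ and $\gamma$ is a non-constant holomorphic germ with $\gamma(0)=p$, then $\nu(s\circ\gamma)\ge(k+1)\nu(\gamma)$. Consequently, when $r-r'=s$, the strict bound $\nu(r'\circ\gamma)<(k+1)\nu(\gamma)$ forces $\nu(r\circ\gamma)=\nu(r'\circ\gamma)$, since the lowest-order term of $r'\circ\gamma$ dominates $s\circ\gamma$. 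This sub-threshold rigidity will be applied branch by branch to each $\gamma_S^j$.

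For the lower bound $D_q(M,p)\ge D_q(M_k,p)$, fix a germ $V^q$ of a $q$-dimensional variety at $p$ and, using Catlin's Proposition 3.1, a generic $S\in G_p^{n-q+1}$ stabilizing the number and contact data of the branches $\gamma_S^j$. For any $r\in I(M)$ the Taylor polynomial $r_k$ lies in $I(M_k)$ and $r-r_k$ vanishes to order $\ge k+1$; applying the observation branch by branch and invoking the identity $\max_j\min(a_j,k+1)=\min(\max_j a_j,k+1)$ yields
\[
\max_j\frac{\nu(r\circ\gamma_S^j)}{\nu(\gamma_S^j)}\;\ge\; \min\!\Bigl(\max_j\frac{\nu(r_k\circ\gamma_S^j)}{\nu(\gamma_S^j)},\,k+1\Bigr).
\]
Because $\min(\cdot,k+1)$ commutes with the generic value in $S$, with the infimum over $r\in I(M)$, and with $\sup_{V^q}$, and because $\{r_k:r\in I(M)\}\subset I(M_k)$, successive application of these quantifiers produces $D_q(M,p)\ge\min(D_q(M_k,p),k+1)$. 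The hypothesis $D_q(M_k,p)<k<k+1$ collapses the right-hand side to $D_q(M_k,p)$.

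For the reverse inequality $D_q(M,p)\le D_q(M_k,p)$, fix $V^q$ and $\varepsilon\in(0,1)$, and pick $r'\in I(M_k)$ whose generic value of $\max_j\nu(r'\circ\gamma_S^j)/\nu(\gamma_S^j)$ is at most $D_q(M_k,p)+\varepsilon<k+1$. Writing $r'=\sum_i\phi_i r_{k,i}$ with $r_i\in I(M)$ and $\phi_i\in\mathcal{C}^\infty_p$, set $r:=\sum_i\phi_i r_i\in I(M)$, so $r-r'=\sum_i\phi_i(r_i-r_{k,i})$ vanishes at $p$ to order $\ge k+1$. On the dense open set of $S$ where the branch-max for $r'$ equals its generic value, that maximum is strictly below $k+1$; hence every individual ratio for $r'$ is below $k+1$, and the rigidity observation forces $\nu(r\circ\gamma_S^j)=\nu(r'\circ\gamma_S^j)$ for every branch. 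Thus the generic values for $r$ and $r'$ coincide, which bounds the inner infimum over $I(M)$ at $V^q$ by $D_q(M_k,p)+\varepsilon$; sending $\varepsilon\to 0$ and taking $\sup_{V^q}$ yields the claim.

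The delicate orchestration throughout is coordinating the three nested quantifiers (supremum over $V^q$, infimum over smooth germs, generic value over $S$) together with the maximum over branches, so that the hypothesis $D_q(M_k,p)<k$ is invoked precisely at the moment where \emph{every} branchwise ratio (not merely their maximum) must lie strictly below the critical threshold $k+1$. The strict inequality $k+1>k>D_q(M_k,p)$ supplies the cushion needed: without it, a higher-order perturbation could mask the lowest-order term on some single branch and destroy the equality of maxima that drives both directions of the argument.
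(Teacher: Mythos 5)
Your proof is correct, and it rests on the same engine as the paper's: D'Angelo's sub-threshold rigidity (Lemma 4.5 of \cite{D}), which says that adding a perturbation vanishing to order $\ge k+1$ cannot change $\nu(\cdot\circ\gamma)$ once that order is already below $(k+1)\nu(\gamma)$. However, you organize the two inequalities rather differently from the paper, and your version is tighter in two respects. First, for $D_q(M,p)\ge D_q(M_k,p)$ you bypass any need to approximate the infimum over the ideal $I(M_k)$: the branchwise bound $\nu(r\circ\gamma)\ge\min(\nu(r_k\circ\gamma),(k+1)\nu(\gamma))$ together with the identity $\max_j\min(a_j,c)=\min(\max_j a_j,c)$ and the fact that $\min(\cdot,k+1)$ is monotone and continuous (hence commutes with $\inf$, $\sup$, and generic value) yields $D_q(M,p)\ge\min(D_q(M_k,p),k+1)$ in one clean sweep, with the hypothesis $D_q(M_k,p)<k$ invoked only at the very end. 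Second, for $D_q(M,p)\le D_q(M_k,p)$ you make explicit the step that the paper leaves implicit: a near-optimal $r'\in I(M_k)$ need not itself be a Taylor polynomial of an element of $I(M)$ (since $I(M_k)$ is the full $\mathcal{C}^\infty_p$-ideal generated by the truncations), and your construction $r:=\sum_i\phi_i r_i\in I(M)$ from $r'=\sum_i\phi_i r_{k,i}$ is exactly what is needed to make that passage rigorous, because $r-r'$ then vanishes to order $\ge k+1$. The paper instead proves $D_q(M,p)\le D_q(M_k,p)$ first by asserting the existence of $r^0\in I(M)$ with $r^0_k$ near-optimal in $I(M_k)$, and then feeds the resulting bound $D_q(M,p)<k$ back into the second direction; your argument avoids this bootstrapping and fills that small gap. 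Both routes reach the lemma, but yours is logically cleaner and more self-contained.
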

\begin{proof} For any germ at $p$ of $q$-dimensional variety $V^q$ and small enough $\epsilon>0$,  
there exists an $r^{0}\in I(M)$ such that
 \[
    \underset{S\in G_p^{n-q+1}}{\text{gen.val}}       \max_{j=1,...,P}\frac{\nu(r^0_k\circ\gamma_S^j)}{\nu(\gamma_S^j)}<D_q(M_k,p)+\epsilon<k.
  \]
Let  $W$ and $W_k$ be open dense  subsets of $G^{n-q+1}$ in which the generic values are assumed in the definitions of $D_q(M,p)$, $D_q(M_k,p)$ for $V^q$, respectively. Then for all $S\in W\cap W_k$, 
  \[ \max_{j=1,...,P}\frac{\nu(r^0_k\circ\gamma_S^j)}{\nu(\gamma_S^j)}<D_q(M_k,p)+\epsilon<k
  \] where $\gamma_S^j$'s  are connected components of $V^q\cap S$.   
   As in the proof of Lemma 4.5 in  \cite{D},  we have 
\[
\frac{\nu(r^0\circ \gamma_S^j)}{\nu(\gamma_S^j)}=\frac{\nu(r^0_k\circ \gamma_S^j)}{\nu(\gamma_S^j)},
\]
for all $j=1,...,P.$ Thus \[
    \underset{S\in G_p^{n-q+1}}{\text{gen.val}}       \max_{j=1,...,P}\frac{\nu(r^0\circ\gamma_S^j)}{\nu(\gamma_S^j)}=\max_{j=1,...,P}\frac{\nu(r^0\circ\gamma_S^j)}{\nu(\gamma_S^j)}=\max_{j=1,...,P}\frac{\nu(r^0_k\circ\gamma_S^j)}{\nu(\gamma_S^j)}<D_q(M_k,p)+\epsilon,
  \]
which implies  that $$D_q(M,p)\leq D_q(M_k,p)+\epsilon.$$ As $\epsilon >0$ is arbitrary, $D_q(M,p)\leq D_q(M_k,p).$

Since $D_q(M,p)\leq D_q(M_k,p)<k$, following the above argument, for any germ at $p$ of $q$-dimensional variety $V^q$ and small enough $\epsilon>0$,  
there exists an $r'\in I(M)$ such that
 \[
\underset{S\in G_p^{n-q+1}}{\text{gen.val}}       \max_{j=1,...,P}\frac{\nu(r'\circ\gamma_S^j)}{\nu(\gamma_S^j)}<D_q(M,p)+\epsilon<k.
  \]
Let $W$ and $W_k$ be open dense  subsets of $G^{n-q+1}$ in which the generic values are assumed in the definitions of $D_q(M,p)$  and  $D_q(M_k,p)$ for $V^q$, respectively. 
Then by the similar argument above, for all $S\in W\cap W_k$, 
   \[
    \underset{S\in G_p^{n-q+1}}{\text{gen.val}}       \max_{j=1,...,P}\frac{\nu(r'_k\circ\gamma_S^j)}{\nu(\gamma_S^j)}=\max_{j=1,...,P}\frac{\nu(r'_k\circ\gamma_S^j)}{\nu(\gamma_S^j)}=\max_{j=1,...,P}\frac{\nu(r'\circ\gamma_S^j)}{\nu(\gamma_S^j)}<D_q(M,p)+\epsilon,
  \]
which implies  that $$D_q(M_k,p)\leq D_q(M,p).$$

\end{proof}

\begin{Lem} \label{L2} Let $M\subset \mathbb{C}^n$ be a subset.  Then $$\sup_U\Delta_q(I(U,k,p)) \leq D_q(M_k,p)\leq 2\sup_U(\Delta_q(I(U,k,p)))^{n-q+1}.$$
\end{Lem}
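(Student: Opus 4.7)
My plan is to follow D'Angelo's strategy from \cite[Lemma 4.5]{D} for the $1$-type case, combined with the Brinzanescu--Nicoara identity $D_q(I,p) = \tilde\Delta_q(I,p)$ from \cite{BN1}, in order to compare $D_q(M_k, p)$ with the $q$-types of the associated holomorphic ideals $I(U,k,p)$.

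For the lower bound $\sup_U \Delta_q(I(U,k,p)) \leq D_q(M_k, p)$, I fix a unitary $U$ and, for $\epsilon > 0$, pick $(q-1)$ linear forms $w = (w_1, \ldots, w_{q-1})$ together with a germ of holomorphic curve $\gamma$ in the slice $\{w = 0\}$ so that for every $r \in I(M)$ both $\nu(h^k \circ \gamma)/\nu(\gamma)$ and $\nu((f^k - Ug^k) \circ \gamma)/\nu(\gamma)$ exceed $\Delta_q(I(U,k,p)) - \epsilon$. The key identity
\[ r_k = \Re(h^k) + \|f^k - Ug^k\|^2 + 2\,\Re\langle Ug^k, f^k - Ug^k\rangle \]
immediately gives $\nu(r_k \circ \gamma) \geq \min\bigl(\nu(h^k \circ \gamma), \nu((f^k - Ug^k) \circ \gamma)\bigr)$. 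Then I construct a $q$-dimensional variety $V^q$ by extending $\gamma$ through $q-1$ holomorphic directions transverse to $\{w = 0\}$; a small perturbation argument of the type used in the proof of (\ref{1.1}) shows that for slices $S \in G_p^{n-q+1}$ in a dense open subset near $\{w = 0\}$, one of the one-dimensional components $\gamma_S^j$ of $V^q \cap S$ is a small deformation of $\gamma$, so the estimate transfers to the generic value in the definition of $D_q(M_k, p)$.

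For the upper bound $D_q(M_k, p) \leq 2\sup_U (\Delta_q(I(U,k,p)))^{n-q+1}$, I fix an arbitrary $q$-dimensional variety $V^q$ and a generic slice $S = \{w = 0\}$ with one-dimensional components $\gamma_S^j$. For each $r \in I(M)$ and each $\gamma_S^j$, setting $T := \nu(r_k \circ \gamma_S^j)$, extracting the pure holomorphic $t$-part of the Taylor expansion of $r_k \circ \gamma_S^j$ yields $\nu(h^k \circ \gamma_S^j) \geq T$; and by choosing $U$ to match the leading vector jets of $f^k \circ \gamma_S^j$ and $g^k \circ \gamma_S^j$, comparison of the lowest $|t|^{2a}$ terms in $\|f^k\circ\gamma_S^j\|^2 - \|g^k\circ\gamma_S^j\|^2$ forces $\nu((f^k - Ug^k) \circ \gamma_S^j) \geq T/2$. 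D'Angelo's Bezout-type bound \cite[Theorem 2.7]{D}, applied to the finite-codimension ideal $(I(U,k,p), w)$ which contains $q-1$ independent linear forms, gives
\[ mult\bigl((I(U,k,p), w), p\bigr) \leq \Delta\bigl((I(U,k,p), w), p\bigr)^{n-q+1} \leq \Delta_q(I(U,k,p))^{n-q+1}. \]
Since $w \circ \gamma_S^j \equiv 0$, the curve $\gamma_S^j$ satisfies $\min_{\phi \in (I(U,k,p), w)}\nu(\phi \circ \gamma_S^j)/\nu(\gamma_S^j) \leq mult((I(U,k,p), w), p)$, and combining with the jet-matching bound yields $T/(2\nu(\gamma_S^j)) \leq \Delta_q(I(U,k,p))^{n-q+1}$, hence $\nu(r_k \circ \gamma_S^j)/\nu(\gamma_S^j) \leq 2\sup_U\Delta_q(I(U,k,p))^{n-q+1}$. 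Taking the max over $j$, the generic value over $S$, and the inf over $r \in I(M_k)$ concludes the argument.

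The principal obstacle is coordinating the unitary $U$, which depends on the leading jets of each $(f^k, g^k)$ for each $r$ and each component $\gamma_S^j$, with the requirement that $U$ parametrize a single holomorphic ideal $I(U, k, p)$, and ensuring that the generic-value conditions align through the slicing. These are handled by careful adaptations of the arguments in \cite{D} and \cite{BN1}, but all the individual estimates are direct extensions of D'Angelo's original approach.
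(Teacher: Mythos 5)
Your lower bound follows the same strategy as the paper (use the inequality $\inf_{\phi\in I(r,U,k,p)}\nu(\phi\circ\gamma)\leq\nu(r_k\circ\gamma)$, select an almost-extremal curve $\gamma$ in a slice $\{w_0=0\}$ for $\Delta_q(I(U,k,p))$, extend $\gamma$ to a $q$-dimensional variety transverse to the slice, and show that $\{w_0=0\}$ enters the generic value of $D_q(M_k,p)$). The paper does this precisely via Lemma 3.1 and Proposition 3.5 of \cite{BN1}; your ``small perturbation argument'' gestures at the same construction but does not justify why the order of contact survives the perturbation, nor why the original hyperplane $H=\{w_0=0\}$ itself, rather than a nearby one, gives the generic value for $C_Z^q$ --- this is exactly what \cite[Prop.~3.5]{BN1} supplies.

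The upper bound, however, has a genuine gap. You write
\[
mult\bigl((I(U,k,p),w),p\bigr)\leq\Delta\bigl((I(U,k,p),w),p\bigr)^{n-q+1}\leq\Delta_q\bigl(I(U,k,p)\bigr)^{n-q+1},
\]
where $w$ is the set of linear forms coming from the \emph{generic} slice $S$. The last inequality is backwards: by definition $\Delta_q(I(U,k,p))=\inf_{w}\Delta\bigl((I(U,k,p),w),p\bigr)$, so for any particular $w$ one has $\Delta\bigl((I(U,k,p),w),p\bigr)\geq\Delta_q(I(U,k,p))$, not $\leq$. Since $w$ is generic (and, as Fassina's examples show, the generic value of $\Delta\bigl((I,w),p\bigr)$ can strictly exceed the infimum $\Delta_q(I,p)$), your chain does not close. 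The paper avoids this by inserting the multiplicity as an intermediary \emph{twice} and exploiting its upper semicontinuity in the generators: one first gets $\Delta_1\bigl(I(U,k,p),w\bigr)\leq mult\bigl(I(U,k,p),w\bigr)$ by \cite[Thm.~2.7]{D}, then $mult\bigl(I(U,k,p),w\bigr)\leq mult\bigl(I(U,k,p),\tilde w_U\bigr)$ for $\tilde w_U$ the infimum-achieving linear forms for $\Delta_q(I(U,k,p))$ by upper semicontinuity of $mult$, and finally $mult\bigl(I(U,k,p),\tilde w_U\bigr)\leq\Delta_1\bigl(I(U,k,p),\tilde w_U\bigr)^{n-q+1}=\Delta_q\bigl(I(U,k,p)\bigr)^{n-q+1}$ by a second application of \cite[Thm.~2.7]{D}. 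This switch from the generic $w$ to the infimum-achieving $\tilde w_U$, legitimized by the semicontinuity of the multiplicity, is the step your argument omits, and without it the claimed inequality does not follow.
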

\begin{proof}For any $r\in I(M)$ and $k\in \mathbb Z^+$, as in the proof of Theorem 3.4 in \cite{JM}, we have that $$\inf_{\phi\in I(r,U,k,p)}\nu(\phi\circ \gamma)\leq \nu(r_k \circ\gamma)$$ for any curve $\gamma\in \mathcal C$.  Thus for any set of $q-1$ linear forms $w$, $$ \inf_{\phi\in (I(r,U,k,p),w)}\frac{\nu(\phi\circ \gamma)}{\nu(\gamma)}\leq\inf_{\phi\in(r_k,w)} \frac{\nu(\phi \circ\gamma)}{\nu(\gamma)},$$
where $(r_k,w)$ is the ideal generated by $r_k$ and $w$ in $\mathcal C^{\infty}_p$.

By taking infimum over $r$, supremum over $\gamma$ and infimum over $w$, we obtain that 

\begin{eqnarray}\label{ex} \Delta_q(I(U,k,p))\leq \inf_w\sup_{\gamma}\inf_{\phi\in (I(M_k),w)}\frac{\nu(\phi\circ\gamma)}{\nu(\gamma)}.
\end{eqnarray}
Let $w_0$ be a set of $q-1$ linear forms at which  right hand side of (\ref{ex})  attains its infimum.  If a curve $\gamma$ is not contained in the set  $\{w_0=0\}$ then there exists a linear function $w_0^j$ so that $\nu(w_0^j\circ\gamma)=\nu(\gamma).$ Thus the supremum above is attained for the curves which are contained in $\{w_0=0\}$. For any such curve, infimum is attained for $\phi\in I(M_k)$. Hence, there exists a $\gamma_0\subset \{w_0=0\}$ such that 
\begin{eqnarray} \label{ex2}\Delta_q(I(U,k,p))\leq \inf_{r\in I(M_k)}\frac{\nu(r\circ \gamma_0)}{\nu(\gamma_0)}.
\end{eqnarray}

Let $H$ be denote the linear subspace $\{w_0=0\}$ in $G^{n-q+1}_p$ and $Z$ be the $q-1$ dimensional hyperplane through $p$, that is transversal to $H$. That is, $\dim _{\mathbb C}(H\bigoplus Z)=n$.  By Lemma 3.1 in \cite{BN1}, there exists a $q$-dimensional variety $C_Z^q$ at $p$ that contains $\gamma_0$ and whose tangent spaces at $p$ contains $Z$.  As in the proof of Proposition 3.5 in \cite{BN1}, $H\in G_p^{n-q+1}$ is one of the hyperplane which gives the generic value in $D_q(M_k,p)$ for $C_Z^q$. Thus $\gamma_0\subset H\cap C_Z^q$ is one of the curves which enter in the computation of $D_q(M_k,p)$. That is,
$$\inf_{r\in I(M_k)}\frac{\nu(r\circ\gamma_0)}{\nu(\gamma_0)}\leq D_q(M_k,p) .$$ Hence by (\ref{ex2}), $$\Delta_q(I(U,k,p))\leq D_q(M_k,p).$$ By taking supremum over $U\in \mathcal U(N_k)$, the inequality on the left hand side in Lemma \ref{L2} follows.

There exists a germ at $p$ of $q$-dimensional variety $V^q$ and an open subset $W$ of $G^{n-q+1}_p$, which depends on $V^q$, such that  for all $S\in W$ 

\begin{eqnarray} \label{ex4} D_q(M_k,p)=\inf_{r\in I(M_k)}\max_{j=1,...,P}\frac{\nu(r\circ\gamma_S^j)}{\nu(\gamma_S^j)} \leq \sup_{\gamma\subset \{ w=0\}}\inf_{r\in I(M_k)}\frac{\nu(r\circ\gamma)}{\nu(\gamma)}
\end{eqnarray}

where $w$ is the set of $q-1$ linear forms with   $\{w=0\}=S$ and   $\gamma_s^j$'s are the connected components of $V^q\cap S$. 
 
By Theorem 3.5 in \cite{D}, for any curve $\gamma$ and  $r\in I(M)$, there exists a unitary matrix  $U\in\mathcal U(N_k)$ such that $\nu(r_k\circ \gamma)\leq 2\nu(\phi\circ \gamma)$ for all $\phi\in I(r,U,k,p)$.  Dividing both sides by $\nu(\gamma)$, taking infimum over $r\in I(M)$, supremum over $\gamma\subset \{w=0\}$, we obtain by (\ref{ex4}) that 
 \begin{eqnarray*} D_q(M_k,p)&\leq& 2\sup_{U}\sup_{\gamma\subset\{w=0\}}\inf_{\phi\in(I(U,k,p))}\frac{\nu(\phi\circ \gamma)}{\nu(\gamma)}= 2\sup_{U}\sup_{\gamma}\inf_{\phi\in(I(U,k,p),w)}\frac{\nu(\phi\circ \gamma)}{\nu(\gamma)}\\&=&2\sup_U\Delta_1(I(U,k,p),w)  \leq 2\sup_{U} \;mult(I(U,k,p),w)\leq 2\sup_U\; mult(I(U,k,p),\tilde w_U)\\&\leq& 2\sup_U(\Delta_1(I(U,k,p),\tilde w_U))^{n-q+1}=2\sup_U(\Delta_q(I(U,k,p)))^{n-q+1} ,
\end{eqnarray*} 
where $\tilde w_U$ is the set of $q-1$ linear functions at which $\Delta_q(I(U,k,p)$ assumes its infimum. Second and fourth inequalities above follow from Theorem 2.7 in \cite{D}.  Third inequality holds since $mult(I)$ is an upper semi continuous function of the generators of $I$. \\

\end{proof}

\begin{Lem}\label{L1} If $\Delta_q(M_k,p)<k$ then  $\Delta_q(M,p)=\Delta_q(M_k,p)$.
\end{Lem}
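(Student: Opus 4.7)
The plan is to run the argument of Lemma \ref{L3} with $\Delta_q$ in place of $D_q$. The elementary fact driving the proof is the same as in the proof of Lemma 4.5 in \cite{D}: $r-r_k$ vanishes to order at least $k+1$ at $p$, so for any non-constant holomorphic curve $\gamma$ at $p$ one has $\nu((r-r_k)\circ\gamma)\geq(k+1)\nu(\gamma)$; hence whenever $\nu(r_k\circ\gamma)/\nu(\gamma)<k$ (or $\nu(r\circ\gamma)/\nu(\gamma)<k$), the strictly smaller order wins and $\nu(r\circ\gamma)=\nu(r_k\circ\gamma)$.

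First I would show $\Delta_q(M,p)\leq\Delta_q(M_k,p)$. Set $T=\Delta_q(M_k,p)<k$, fix $\epsilon>0$ with $T+\epsilon<k$, and pick a $(q-1)$-tuple $w^{*}$ of linear forms with $\Delta((I(M_k),w^{*}),p)\leq T+\epsilon$. Using the standard fact that, for an ideal in $\mathcal{C}_p^\infty$, the infimum of the order of vanishing along $\gamma$ is achieved on any generating set, one has
$$\inf_{\phi\in(I(M_k),w^{*})}\frac{\nu(\phi\circ\gamma)}{\nu(\gamma)}=\min\!\left(\inf_{r\in I(M)}\frac{\nu(r_k\circ\gamma)}{\nu(\gamma)},\;\min_j\frac{\nu(w^{*}_j\circ\gamma)}{\nu(\gamma)}\right),$$
and the corresponding identity with $r_k$ replaced by $r$ for the $I(M)$-side. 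For every $\gamma$ the right-hand side is $<k$. If the $w^{*}$-part realizes the minimum, the same quantity appears on the $I(M)$-side. Otherwise $\inf_{r}\nu(r_k\circ\gamma)$ is a nonnegative integer strictly less than $k\,\nu(\gamma)$, hence attained at some $r^{*}\in I(M)$; the elementary fact gives $\nu(r^{*}\circ\gamma)=\nu(r^{*}_k\circ\gamma)$, and plugging $\phi=r^{*}$ into the $I(M)$-infimum shows that it is bounded above by the $I(M_k)$-infimum for this $\gamma$. Taking $\sup_\gamma$ yields $\Delta((I(M),w^{*}),p)\leq T+\epsilon$, so $\Delta_q(M,p)\leq T+\epsilon$; letting $\epsilon\to 0$ finishes this direction.

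For the reverse inequality, the first step has already forced $\Delta_q(M,p)<k$, so the symmetric argument applies: pick a $(q-1)$-tuple $w^{**}$ with $\Delta((I(M),w^{**}),p)<\Delta_q(M,p)+\epsilon<k$. For each $\gamma$, either the linear part realizes the minimum (and contributes identically on both sides) or $\inf_{r}\nu(r\circ\gamma)$ is attained at some $r^{*}$ with ratio $<k$, whence $\nu(r^{*}_k\circ\gamma)=\nu(r^{*}\circ\gamma)$. Either way, the $I(M_k)$-infimum on $\gamma$ is dominated by the $I(M)$-infimum on $\gamma$; taking $\sup_\gamma$ and sending $\epsilon\to 0$ gives $\Delta_q(M_k,p)\leq\Delta_q(M,p)$.

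The only non-routine ingredients are the identity of infima of orders of vanishing on $\gamma$ between the ideal and its generators, and the integrality of $\nu$ that forces the relevant infima to be attained; both are standard, and the rest is a direct transcription of the strategy of Lemma \ref{L3}.
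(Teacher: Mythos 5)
Your proof is correct and follows essentially the same route as the paper's: both directions rest on D'Angelo's Lemma 4.5 (that $\nu(r\circ\gamma)=\nu(r_k\circ\gamma)$ once the relevant ratio is below $k$), applied to a near-optimal tuple of $q-1$ linear forms together with an attained minimizing generator, and then an $\epsilon\to 0$ limit. The only cosmetic difference is that you keep the linear forms inside the ideal and split cases according to which generator realizes the minimum, whereas the paper first restricts to curves contained in $\{w_0=0\}$ and then drops the linear forms; both reductions amount to the same generator-wise comparison of infima.
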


\begin{proof}  Let $\displaystyle \Delta_q(M_k,p)=\sup_{\gamma}\inf_{\phi\in(I(M_k),w_0)}\frac{\nu(\phi\circ\gamma)}{\nu(\gamma)}$ for some set of $q-1$  linear forms $w_0$. As in the proof of Lemma \ref{L2}, the supremum above is attained for the curves which are contained in $\{w_0=0\}$. For any such curve $\gamma$,  
$\displaystyle \inf_{\phi\in(I(M_k),w_0)}\frac{\nu(\phi\circ\gamma)}{\nu(\gamma)}$ is attained for $\phi\in I(M_k)$. 
Hence $$\Delta_q(M_k,p)=\sup_{\gamma\subset\{w_0=0\} }\inf_{r\in I(M)}\frac{\nu(r_k\circ \gamma)}{\nu(\gamma)}<k.$$

For all $\gamma\subset \{w_0=0 \}$, $\epsilon>0$ small enough, there exists $r^0\in I(M)$ such that $$\frac{\nu(r^0_k\circ\gamma)}{\nu(\gamma)}< \Delta_q(M_k,p)+\epsilon<k.$$ 
By Lemma 4.5 in \cite{D}, $\frac{\nu(r^0_k\circ\gamma)}{\nu(\gamma)}=\frac{\nu(r^0\circ\gamma)}{\nu(\gamma)}$ for any curve $\gamma.$ By taking infimum over $r\in I(M)$, supremum over $\gamma\subset \{w_0=0\}$ in $\frac{\nu(r\circ \gamma)}{\nu(\gamma)}$, we obtain that $\Delta_q(M,p)\leq \Delta_q(M_k,p)+\epsilon.$ 

By the same argument above, $$\Delta_q(M,p)=\inf_w\sup_{\gamma\subset\{w=0\}} \inf_{r\in I(M)} \frac{\nu(r\circ \gamma)}{\nu(\gamma)}<k. $$ Let the infimum be attained for some set of $q-1$ linear forms, $w_1$. Then for all $\gamma\subset\{w_1=0 \}$, there exists $r^0\in I(M)$ such that $\frac{\nu(r^0\circ \gamma)}{\nu(\gamma)}= \inf_{r\in I(M)} \frac{\nu(r\circ \gamma)}{\nu(\gamma)}<k.$ Lemma 4.5 in \cite{D} implies that $\frac{\nu(r^0_k\circ \gamma)}{\nu(\gamma)}=\frac{\nu(r^0\circ \gamma)}{\nu(\gamma)}$.  By taking infimum over $r\in I(M)$ and supremum over $\gamma\subset \{w_1=0\}$ in  $\frac{\nu(r_k\circ \gamma)}{\nu(\gamma)}$, we obtain that
 $$\Delta_q(M_k,p)\leq \sup_{\gamma\in \{w_1=0\}}\inf_{r\in I(M)}\frac{\nu(r_k\circ\gamma)}{\nu(\gamma)}=\Delta_q(M,p). $$ 
\end{proof}

\begin{Lem}\label{L4} $\sup_{U}\Delta_q(I(U,k,p))\leq \Delta_q(M_k,p)\leq 2\sup_{U}\Delta_q(I(U,k,p)).$
\end{Lem}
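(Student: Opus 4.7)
The plan is to follow the template of the proof of Lemma \ref{L2}, with one structural simplification. Since $\Delta_q(M_k,p)$ is an infimum--supremum and carries no outer supremum over $q$-dimensional varieties as $D_q$ does, the detour through multiplicity used in Lemma \ref{L2} can be avoided; this is what produces a linear bound on the right-hand side rather than one with exponent $(n-q+1)$.

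For the left-hand inequality, I would start from the same pointwise estimate $\inf_{\phi\in I(r,U,k,p)}\nu(\phi\circ\gamma)\leq \nu(r_k\circ\gamma)$ that opens the proof of Lemma \ref{L2} (from the proof of Theorem 3.4 in \cite{JM}). Combined with the inclusion $I(r,U,k,p)\subset I(U,k,p)$, adjoining any set $w$ of $q-1$ linear forms, dividing by $\nu(\gamma)$, and taking the infimum over $r\in I(M)$ yield
\[
\inf_{\phi\in(I(U,k,p),w)}\frac{\nu(\phi\circ\gamma)}{\nu(\gamma)} \leq \inf_{\phi\in(I(M_k),w)}\frac{\nu(\phi\circ\gamma)}{\nu(\gamma)}.
\]
Taking $\sup_\gamma$ and then $\inf_w$ produces $\Delta_q(I(U,k,p),p)\leq \Delta_q(M_k,p)$, and a final $\sup_U$ completes this direction.

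For the right-hand inequality, I would fix $w_0$ (nearly) attaining the infimum in the definition of $\Delta_q(M_k,p)$. Exactly as in Lemma \ref{L2}, the outer supremum is realized along curves $\gamma\subset\{w_0=0\}$, so that
\[
\Delta_q(M_k,p)=\sup_{\gamma\subset\{w_0=0\}}\inf_{r\in I(M)}\frac{\nu(r_k\circ\gamma)}{\nu(\gamma)}.
\]
Theorem 3.5 of \cite{D} supplies, for each pair $(r,\gamma)$, a unitary $U\in\mathcal U(N_k)$ with $\nu(r_k\circ\gamma)\leq 2\nu(\phi\circ\gamma)$ for every $\phi\in I(r,U,k,p)$. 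Dividing by $\nu(\gamma)$, taking the infimum over $r$, the supremum over $\gamma\subset\{w_0=0\}$, and pushing the supremum over $U$ outside (as in Lemma \ref{L2}) lead to
\[
\Delta_q(M_k,p)\leq 2\sup_U\sup_\gamma\inf_{\phi\in(I(U,k,p),w_0)}\frac{\nu(\phi\circ\gamma)}{\nu(\gamma)}.
\]

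The final, and main, step is to identify the right-hand side with $2\sup_U\Delta_q(I(U,k,p),p)$. The principal obstacle is that the specific $w_0$ attaining the infimum for $M_k$ need not simultaneously realize the infimum in $\Delta_q(I(U,k,p),p)=\inf_w\Delta_1((I(U,k,p),w),p)$ for each $U$, so the exchange of $\sup_U$ and $\inf_w$ is not a priori legal. I expect to resolve this, as in the last step of the proof of Lemma \ref{L2}, by exploiting the genericity of the infimum established in \cite{BN1}: one chooses $w_0$ in a dense open subset of the space of $(q-1)$-tuples of linear forms on which both infima are generically attained, so that $w_0$ can serve simultaneously as the minimizer for $M_k$ and for each $I(U,k,p)$. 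The essential gain over Lemma \ref{L2} is that no multiplicity inequality is invoked, so the polynomial factor $(n-q+1)$ is absent and the bound remains linear.
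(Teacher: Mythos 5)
Your left-hand inequality is fine and matches the paper's argument in substance; keeping $w$ arbitrary and taking $\inf_w$ at the end is a mild streamlining of the paper's version, which passes through a minimizing $w_0$ via Lemma~\ref{L1}. Either way, the direction of the inclusion $I(r,U,k,p)\subset I(U,k,p)$ cooperates, and the step is sound.

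The right-hand inequality is where your proposal diverges from the paper, and the divergence is what produces the obstacle you end up unable to close. By fixing a $w_0$ that (nearly) minimizes $\Delta_q(M_k,p)$ before invoking Theorem~3.5 of \cite{D}, you arrive at
\[
\Delta_q(M_k,p)\leq 2\sup_U\sup_{\gamma\subset\{w_0=0\}}\inf_{\phi\in(I(U,k,p),w_0)}\frac{\nu(\phi\circ\gamma)}{\nu(\gamma)}=2\sup_U\Delta_1\bigl((I(U,k,p),w_0),p\bigr),
\]
and since $\Delta_q(I(U,k,p))=\inf_w\Delta_1((I(U,k,p),w),p)\leq\Delta_1((I(U,k,p),w_0),p)$, your right side is \emph{larger than} $2\sup_U\Delta_q(I(U,k,p))$, so the desired bound does not follow. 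You identify this correctly, but your proposed repair via genericity does not hold up. The genericity statements in \cite{BN1} concern $\tilde\Delta_q$, the \emph{generic value} over $w$, not the infimum $\Delta_q$; nothing there asserts that the $w$ realizing $\inf_w\Delta_1((I,w),p)$ for a given ideal $I$ forms a dense open set (in general it does not, as the infimum is achieved at special, non-generic $w$, which is exactly why $\Delta_q$ and $\tilde\Delta_q$ can differ). Even setting that aside, you would need a single $w_0$ minimizing simultaneously for $M_k$ \emph{and} for every $I(U,k,p)$ as $U$ ranges over the uncountable group $\mathcal U(N_k)$, and there is no compactness or continuity argument offered to produce such a $w_0$. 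Your pointer to ``the last step of Lemma~\ref{L2}'' is also off: that step uses the multiplicity estimate from Theorem~2.7 of \cite{D} together with upper semicontinuity of $mult$, not a genericity argument. The paper sidesteps all of this by \emph{not} fixing $w_0$ for the right-hand direction: it applies Theorem~3.5 for an arbitrary $w$ and $\gamma\subset\{w=0\}$, takes $\inf_r$, $\sup_\gamma$, and only then $\inf_w$, so the combination $\inf_w\sup_{\gamma\subset\{w=0\}}\inf_{\phi\in I(U,k,p)}$ is, by definition, $\Delta_q(I(U,k,p))$ and no interchange of $\inf_w$ with $\sup_U$ has to be manufactured. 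If you replace your fixed-$w_0$ step with this general-$w$ step, the argument goes through as the paper presents it.
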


\begin{proof}  Let $w_0$ be a set of linear functions such that $$\Delta_q(M_k,p)=\sup_{\gamma}\inf_{\phi\in(I(M_k),w_0)}\frac{\nu(\phi\circ\gamma)}{\nu(\gamma)}.$$  By the same argument in the proof of Lemma \ref{L1}, $$\Delta_q(M_k,p)=\sup_{\gamma\subset \{w_0=0\}}\inf_{r\in I(M)} \frac{\nu(r_k\circ\gamma)}{\nu(\gamma)}.$$  By the proof of Theorem 3.4 in \cite{JM}, for any $r\in I(M)$ and any curve $\gamma$,  $$\inf_{\phi\in I(r,U,k,p)}\nu(\phi\circ \gamma)\leq \nu(r_k\circ\gamma),$$ for all $U\in \mathcal U(N_k)$. 
By taking infimum over $r\in I(M)$, we get $$\inf_{\phi\in I(U,k,p)}\frac{\nu(\phi\circ\gamma)}{\nu(\gamma)}\leq \inf_{r\in I(M)}\frac{\nu(r_k\circ\gamma)}{\nu(\gamma)}. $$
Thus

\begin{eqnarray*} \Delta_q(I(U,k,p))&=&\inf_w\sup_{\gamma\subset \{w=0\}}\inf_{\phi\in I(U,k,p)}\frac{\nu(\phi\circ\gamma)}{\nu(\gamma)}\leq\sup_{\gamma\subset \{w_0=0\}}\inf_{\phi\in I(U,k,p)}\frac{\nu(\phi\circ\gamma)}{\nu(\gamma)}\\&\leq& \sup_{\gamma\subset \{w_0=0\}}\inf_{r\in I(M)}\frac{\nu(r_k\circ\gamma)}{\nu(\gamma)}=\Delta_q(M_k,p).\end{eqnarray*}
Hence $\sup_{U}\Delta_q(I(U,k,p))\leq \Delta_q(M_k,p).$\\

For any set of $q-1$ linear forms $w$ and any curve $\gamma\subset \{w=0\}$, by Theorem 3.5 in \cite{D}, there exists a unitary matrix $U\in \mathcal U(N_k)$, such that $ \frac{\nu(r_k\circ\gamma)}{\nu(\gamma)}\leq 2\inf_{\phi\in I(r,U,k,p)}\frac{\nu(\phi\circ\gamma)}{\nu(\gamma)}$ for all  $r\in I(M)$. By taking infimum of $\frac{\nu(r_k\circ \gamma)}{\nu(\gamma)}$, over $r\in I(M)$, supremum over $\gamma\subset\{w=0\}$ and infimum over the set of $q-1$ linear forms $w$, we obtain that $$ \Delta_q(M_k,p)\leq 2\sup_U\inf_{w}\sup_{\gamma\subset\{w=0\}}\inf_{\phi\in I(U,k,p)}\frac{\nu(\phi\circ\gamma)}{\nu(\gamma)}=2\sup_{U}\Delta_q(I(U,k,p)).$$
\end{proof}

Now we can prove our main theroem.

\begin{proof}[Proof of Theorem \ref{main}]
 Let $w_0$ be the set of $q-1$ linear forms such that   $\Delta_q(M_k, p_0)$ attains its infimum. Then   $$\displaystyle \Delta_q(M_k, p_0)=\sup_{\gamma\subset\{w_0=0\} }\inf_{r\in I(M)}\frac{\nu(r_k\circ\gamma)}{\nu(\gamma)}.$$ By identifying $\{w_0=0\}=\mathbb C^{n-q+1}$, Theorem 1.3 in \cite{Y} implies that  there is a neighborhood $V$ of $p_0$ such that for all $p\in V$

$$\Delta_q(M_k,p)\leq 2(\Delta_q(M_k,p_0))^{n-q+1}.$$ For all $p\in V$, 
\begin{eqnarray*}
D_q(M_k,p)&\leq& 2\sup_U\Delta_q(I(U,k,p))^{n-q+1}\leq 2\Delta_q(M_k,p)^{n-q+1}\\&\leq& 2^{n-q+2}\Delta_q(M_k,p_0)^{(n-q+1)^2}\leq 2^{n-q+2}(2\sup_U\Delta_q(I(U,k,p_0)))^{(n-q+1)^2}\\&\leq& 2^{(n-q+1)^2+n-q+2}D_q(M_k,p_0)^{(n-q+1)^2}=2^{(n-q+1)^2+n-q+2}D_q(M,p_0)^{(n-q+1)^2}.
\end{eqnarray*}

First and the last inequalities above follow from Lemma \ref{L2}. Second and fourth inequalities follow from Lemma \ref{L4}. Last equality holds for  large enough $k$ by Lemma \ref{L3}. If $k> 2^{(n-q+1)^2+n-q+2}D_q(M,p_0)^{(n-q+1)^2}$, Lemma \ref{L3} implies that $D_q(M_k,p)=D_q(M,p)$  hence the theorem follows.

\end{proof}

\noindent \textbf{Acknowledgments.} I am grateful to Prof. J.P. D'Angelo for his suggestion to work on $q$-types. I would like to thank to the referee for his/her important comments, corrections and suggestions which improved the presentation of the paper significantly. The author is supported by T\"UB\.ITAK 3501 Proj. No 120F084.

\end{document}